\newcommand{\be}{\begin{equation}}
\newcommand{\ee}{\end{equation}}
\newcommand{\beq}{\begin{eqnarray}}
\newcommand{\eeq}{\end{eqnarray}}
\newtheorem{thm}{Theorem}[section]
\newtheorem{cor}{Corollary}[section]
\theoremstyle{remark}
\numberwithin{equation}{section}
\numberwithin{equation}{section}
\newtheorem{theorem}{Theorem}[section]
\newtheorem{remark}[theorem]{Remark}
\newtheorem*{claim*}{Claim}
\DeclareMathOperator{\Rm}{Rm}
\DeclareMathOperator{\Ric}{Ric}
\DeclareMathOperator{\Rc}{Rc}
\DeclareMathOperator{\Inj}{inj}
\DeclareMathOperator{\Vol}{Vol}
\DeclareMathOperator{\vol}{vol}
\def\K{K\"ahler }
\begin{document}

\title{A note on a diffeomorphism criterion via long-time Ricci flow}
\author[S. Huang]{Shaochuang Huang}
\address[Shaochuang Huang]{College of Science, Sun Yat-sen University, Shenzhen, Guangdong, China.}
\email{huangshch23@mail.sysu.edu.cn}
\thanks{S. Huang is partially supported by a regular fund from Shenzhen Science and Technology Innovation Bureau No.JCYJ20240813151005007 and a start-up fund from SYSU}

\author[Z. Peng]{Zhuo Peng}
\address[Zhuo Peng]{College of Science, Sun Yat-sen University, Shenzhen, Guangdong, China.}
\email{pengzh55@mail2.sysu.edu.cn}

\subjclass[2020]{Primary 53E20}
\date{{\today}}

\begin{abstract} 
In this note, we give a  diffeomorphism (to $\mathbb{R}^n$) criterion via long-time Ricci flow and show some applications. In particular, we provide an affirmative answer that  the conclusion in \cite{CHL24} and \cite{Mar25} about manifolds with small curvature concentration can be improved to diffeomorphism in dimension $4$.   
\end{abstract}

\maketitle

\section{Introduction}

Ricci flow, which was introduced by R. Hamilton in 1982\cite{Ham82}, is a powerful tool to study the topology and geometry of the underlying manifold, see \cite{Per02},\cite{BW08},\cite{BS09} for example. In this note, we will focus on studying the topology of complete non-compact manifolds via Ricci flow.   For the compact case, the model is the standard sphere $\mathbb{S}^n$ or the complex projective space $\mathbb{CP}^n$; while for the non-compact case, the model is the standard Euclidean space $\mathbb{R}^n$ or $\mathbb{C}^n$. In \cite{Shi90Rd}, W.-X. Shi initiates a programme to study uniformization conjecture on \K geometry via Ricci flow  under the supervision by S.-T. Yau. The uniformization conjecture states that any $n$-dimensional complete  non-compact \K manifold with positive holomorphic bisectional curvature must be bi-holomorphic to the standard complex Euclidean space $\mathbb{C}^n$. Shi aims to construct a complete long-time solution to the Ricci flow starting from such \K manifold in order to study the topology of this underlying manifold. This  programme is now  achieved by many works under the maximal volume growth assumption, see \cite{CTZ04},\cite{CT06},\cite{HT18},\cite{LT20} for example. 

On the other hand, it is also natural to study the topology of a complete non-compact  Riemannian manifold via Ricci flow. In fact, Chen, Tang and Zhu first showed that the topology of certain \K manifolds with positive holomorphic bisectional curvature must be $\mathbb{R}^{2n}$ via Ricci flow in \cite{CZ03},\cite{CTZ04}, see also \cite{NT04}.  In \cite{Wang20}, B. Wang investigates the local entropy and pseudo-locality theorems for the Ricci flow. Then he applies these to provide a Ricci flow proof for an important result, which was first proved by J. Cheeger and T. Colding \cite{CC97} via their theory on studying Gromov-Hausdorff convergence for Riemannian manifolds with Ricci curvature bounded from below, stating that an $n$-dimensional complete non-compact Riemannian manifold with non-negative Ricci curvature and almost Euclidean volume ratio must be diffeomorphic to $\mathbb{R}^n$. In \cite{HL21}, F. He and M.-C. Lee show that an $n$-dimensional complete non-compact Riemannian manifold with weakly $PIC_1$ and  maximal volume growth must be diffeomorphic to $\mathbb{R}^n$ via Ricci flow. In \cite{CHL24}, Chan, Lee and the first named author show that  an $n$-dimensional complete non-compact Riemannian manifold with small curvature concentration under certain assumptions must be homeomorphic to $\mathbb{R}^n$ if $n\geq 4$ and diffeomorphic to  $\mathbb{R}^n$ if $n\geq 5$. Later, A. Martens \cite{Mar25} removes the scalar curvature assumption in \cite{CHL24}.

The initial motivation of this note is to investigate whether in dimension $4$, the conclusion in \cite{CHL24} and \cite{Mar25} can be improved to diffeomorphism.  We will provide an  affirmative answer by showing the following diffeomorphism (to $\mathbb{R}^n$) criterion via long-time Ricci flow. 

\begin{thm} \label{mainthm1}
    Suppose $(M^n,g(t))$ is a smooth complete long-time solution to the Ricci flow on $M^n\times[0,+\infty)$ satisfying 
    \begin{enumerate}
        \item[\textup{(\romannumeral1)}] $\displaystyle \Ric(g(t))\ge-\frac{\psi}{t}\text{ for some constant }0<\psi<\frac{1}{2}$;
        \item[\textup{(\romannumeral2)}] $\displaystyle\Inj(g(t))\ge\beta\sqrt{t}\text{ for some constant }\beta>0$.
    \end{enumerate}
    Then $M^n$ is diffeomorphic to $\mathbb{R}^n$.
\end{thm}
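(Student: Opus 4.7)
The plan is to use hypothesis (ii) to exhibit, at each time $T>0$, a smoothly embedded closed $g(T)$-geodesic ball about a fixed basepoint $p$ that is diffeomorphic to a closed Euclidean disk via the exponential map, and then to use hypothesis (i) together with the strict bound $\psi<1/2$ to show that these closed balls nest into a smooth exhaustion of $M$. The desired conclusion will follow from a classical smooth exhaustion result.

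The key input from (i) is a distance-distortion estimate. A standard Hamilton-type barrier argument applied to $g(t)$-minimizing geodesics, combined with $\Ric(g(t))\ge -\psi/t$, yields
\[
\frac{d}{dt}\, d_t(x,y) \leq \frac{\psi}{t}\, d_t(x,y) \qquad \text{(in the barrier sense)},
\]
which integrates to $d_t(x,y) \leq (t/s)^\psi\, d_s(x,y)$ for all $0<s\le t$. Now fix $p\in M$, choose any $\rho\in(0,\beta)$, and let $T_k\to\infty$ be an increasing sequence. Define $V_k := B_{g(T_k)}(p,\rho\sqrt{T_k})$. By (ii), $\exp_p^{g(T_k)}$ is a diffeomorphism on the open ball $B(0,\beta\sqrt{T_k})\subset T_pM$, which strictly contains the closed ball of radius $\rho\sqrt{T_k}$; hence $\overline{V_k}$ is smoothly diffeomorphic to the closed disk $\overline{D}{}^n$ and $V_k$ to $\mathbb{R}^n$, while Hopf--Rinow ensures $\overline{V_k}$ is compact.

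The two required containments are immediate consequences of the distance estimate. If $d_{T_k}(p,q)\le\rho\sqrt{T_k}$, then
\[
d_{T_{k+1}}(p,q) \leq (T_{k+1}/T_k)^\psi\,\rho\sqrt{T_k} = \rho\sqrt{T_{k+1}}\,(T_k/T_{k+1})^{1/2-\psi} < \rho\sqrt{T_{k+1}},
\]
where the strict inequality uses $\psi<1/2$; hence $\overline{V_k}\subset V_{k+1}$. For exhaustion, fix $q\in M$ and any $s>0$: $d_T(p,q)/\sqrt{T}\le T^{\psi-1/2}\, s^{-\psi}\, d_s(p,q)\to 0$ as $T\to\infty$, so $q\in V_k$ for all sufficiently large $k$. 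Thus $M=\bigcup_k V_k$ is presented as a nested smooth disk exhaustion by pre-compact open sets $V_k\cong\mathbb{R}^n$ with each $\overline{V_k}\subset V_{k+1}$ smoothly embedded as a closed $n$-disk; a standard smooth exhaustion lemma (in the spirit of arguments already employed in \cite{HL21,CHL24}) then identifies $M$ diffeomorphically with $\mathbb{R}^n$. The most delicate point of the whole proof is this concluding topological step, especially in dimension $4$---which is precisely why the strict inequality $\rho<\beta$ is built in from the start, so that each $\overline{V_k}$ is a genuinely smoothly embedded closed disk (not merely the closure of an open injectivity ball); the Ricci-flow content is otherwise elementary given the sign of $\psi-1/2$.
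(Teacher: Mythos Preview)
Your argument coincides with the paper's Method~1 in its geometric content: the distance-distortion estimate $d_t(x,y)\le (t/s)^\psi d_s(x,y)$ from~(i), the nesting $\overline{V_k}\subset V_{k+1}$ and exhaustion $\bigcup_k V_k=M$ forced by $\psi<\tfrac12$, and the identification of each $\overline{V_k}$ with a smooth closed disk via $\exp_p^{g(T_k)}$ using~(ii). The divergence is exactly at the step you yourself flag as ``most delicate'': you defer to a ``standard smooth exhaustion lemma'' citing \cite{HL21,CHL24}, but this is precisely the step the present paper is written to supply---\cite{CHL24} obtains only \emph{homeomorphism} in dimension~$4$, so it cannot be invoked for the smooth statement, and \cite{HL21} does not quote an abstract lemma but carries out the explicit construction that the paper adapts. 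What Method~1 actually does is exploit structure you have discarded: the inverse exponential maps $\mathcal{E}_t=(\exp_p^{g(t)})^{-1}$ form a \emph{smooth one-parameter family} of embeddings of $B_{t_i}(R_i)$ into $\mathbb{R}^n$ as $t$ runs over $[t_i,t_{i+1}]$, and cutting off the generating time-vector field outside $\tilde B(R_{i+1}-\varepsilon_i)$ produces a compactly supported diffeomorphism $\Phi_i$ of $\mathbb{R}^n$ with $\Phi_i\circ\mathcal{E}_{t_{i+1}}=\mathcal{E}_{t_i}$ on $B_{t_i}(R_i)$. The compositions $\Theta_i=\Phi_1\cdots\Phi_{i-1}\mathcal{E}_{t_i}$ then stabilize on each $B_{t_j}(R_j)$ and glue to a global diffeomorphism $M\to\mathbb{R}^n$. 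Your abstract route can be rescued by naming Palais' disk theorem (any two orientation-compatible smooth embeddings of a closed $n$-disk into a connected $n$-manifold are ambient isotopic, in every dimension including~$4$), but as written the black box you invoke is the entire content of the theorem in the one dimension that motivated it.

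The paper also supplies a genuinely different second proof (Method~2, after \cite{Wang20}) that you may find cleaner: the space-time hypersurface $M'=\{(x,t):d_{g(t)}(x,x_0)=\tfrac{\beta}{2}\sqrt{t}\}$ is identified with $\mathbb{R}^n$ via the space-time exponential map, and the projection $(x,t)\mapsto x$ is shown---using the same estimate $d_{t'}/d_t\le (t'/t)^\psi$ and $\psi<\tfrac12$---to be a smooth bijection with nondegenerate differential, hence a diffeomorphism $M'\to M$.
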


\begin{remark} \label{rmkkappa}
    We can see from the proof of Method 1 of Theorem \ref{mainthm1}  that if the injectivity radius condition \textup{(\romannumeral2)} is  changed to $\Inj(g(t))\ge\beta t^{\kappa}\text{ for some } \kappa>0$, then we only need to require $0<\psi<\kappa$ in the curvature condition \textup{(\romannumeral1)}.
\end{remark}

As applications, we first obtain the following, which confirms that  the conclusion in \cite{CHL24} and \cite{Mar25} can be improved to diffeomorphism in dimension $4$. 

\begin{cor}\label{small curvature concentration}
    For $n\geq4$ and $A,v_0>0$, there is $\sigma>0$ depending only on $n,A,v_0$ such that if $(M^n,g_0)$ is an $n$-dimensional complete non-compact manifold satisfying
    \begin{enumerate}
        \item[\textup{(\romannumeral1)}] $\Vol_{g_0}(B_{g_0}(x,r))\le v_0r^n$ for all $r>0$;
        \item[\textup{(\romannumeral2)}] $\bar{\nu}(M,g_0)\ge-A$;
        \item[\textup{(\romannumeral3)}] $\displaystyle\bigg(\int_M|\Rm(g_0)|^{n/2}d\vol_{g_0}\bigg)^{2/n}\le\sigma$;
        \item[\textup{(\romannumeral4)}] $\Ric(g_0)\ge-k$ for some $k\in\mathbb{R}$,
    \end{enumerate}
    for all $x\in M$. Then $M^n$ is diffeomorphic to $\mathbb{R}^n$.
\end{cor}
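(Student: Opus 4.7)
The plan is to reduce Corollary \ref{small curvature concentration} to Theorem \ref{mainthm1} by starting a long-time Ricci flow from $g_0$ and verifying its two hypotheses. The assumptions (i)--(iv) are exactly those under which the main constructions of \cite{CHL24, Mar25} apply: they produce a smooth complete long-time solution $g(t)$ on $M \times [0, +\infty)$ with $g(0) = g_0$, together with a quantitative curvature decay of the form $|\Rm(g(t))|_{g(t)} \le C\sigma/t$ for all $t>0$, where $C = C(n, A, v_0)$ is independent of $\sigma$. Here (iv) is used to start the flow via Shi's short-time existence, while (i)--(iii) drive the a priori decay estimate.

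From this point, condition (i) of Theorem \ref{mainthm1} is immediate: the curvature decay yields $\Ric(g(t)) \ge -(n-1)C\sigma/t$, so choosing $\sigma$ small depending only on $n, A, v_0$ guarantees $\psi := (n-1)C\sigma < 1/2$.

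For condition (ii), I would combine entropy monotonicity with the curvature bound. Perelman's monotonicity of $\bar\nu$ along the Ricci flow (as used in \cite{Wang20}) gives $\bar\nu(M, g(t)) \ge -A$ for all $t \ge 0$, which in turn yields a $\kappa$-noncollapsing estimate $\Vol_{g(t)}(B_{g(t)}(x, r)) \ge \kappa r^n$ for $0 < r \le \sqrt{t}$ with $\kappa = \kappa(n, A) > 0$. Combined with $|\Rm(g(t))| \le C\sigma/t$, the Cheeger--Gromov--Taylor injectivity radius estimate then produces $\Inj(g(t)) \ge \beta \sqrt{t}$ for some $\beta = \beta(n, A, v_0) > 0$, as required.

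With both conditions of Theorem \ref{mainthm1} verified, $M^n$ is diffeomorphic to $\mathbb{R}^n$. The real work is already absorbed into the cited a priori estimate: one needs the constant $C$ in the curvature decay to remain bounded uniformly as $\sigma \to 0$, so that $\psi$ can actually be driven below $1/2$. This is precisely what is established in \cite{CHL24, Mar25}, so once Theorem \ref{mainthm1} is granted the deduction is essentially a repackaging. The genuinely new content of the corollary over \cite{CHL24, Mar25} lies in the dimension $n=4$ case, where previously only homeomorphism to $\mathbb{R}^4$ was known; Theorem \ref{mainthm1} supplies the smooth upgrade uniformly in all dimensions $n \ge 4$.
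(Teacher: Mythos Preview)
Your proposal is correct and follows essentially the same route as the paper: both invoke the long-time Ricci flow from \cite{CHL24, Mar25} with $|\Rm(g(t))|\le C_0\sigma t^{-1}$ and $\Inj(g(t))\ge C_0^{-1}\sqrt{t}$, then feed these into Theorem~\ref{mainthm1} after taking $\sigma$ small. The only cosmetic difference is that the paper cites the injectivity radius bound directly from those references, whereas you spell out its derivation via entropy monotonicity, $\kappa$-noncollapsing, and Cheeger--Gromov--Taylor---which is precisely how it is obtained there.
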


Another interesting application is the following, which removes the almost Euclidean assumption in the main result of \cite{CC97} and \cite{Wang20} mentioned above in dimension $3$.

\begin{cor}\label{3d}
    Suppose $(M^3,g_0)$ is a $3$-dimensional complete non-compact manifold satisfying
    \begin{enumerate}
        \item[\textup{(\romannumeral1)}] $\Ric(g_0)\ge0$;
        \item[\textup{(\romannumeral2)}] $g_0$ has maximal volume growth.
    \end{enumerate}
    Then $M^3$ is diffeomorphic to $\mathbb{R}^3$.
\end{cor}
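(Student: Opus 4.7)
The plan is to construct a smooth complete long-time Ricci flow $g(t)$ on $M^3$ starting from $g_0$ that satisfies both hypotheses of Theorem \ref{mainthm1}, and then invoke that theorem. Since $(M^3,g_0)$ has $\Ric(g_0)\ge 0$ and maximal volume growth, Bishop--Gromov yields uniform non-collapsing $\Vol_{g_0}(B_{g_0}(x,r))\ge v_0 r^3$ for all $x\in M$ and all $r>0$. This non-collapsing, together with the initial Ricci lower bound, places the initial data within the scope of the robust short-time existence theorems for Ricci flow on non-compact $3$-manifolds (Hochard-type pyramid constructions, Simon--Topping, or Cabezas-Rivas--Wilking type approaches), which produce a smooth complete solution $g(t)$ on some maximal time interval with a type-III decay $|\Rm|(g(t))\le C/t$. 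Because non-negative Ricci curvature is preserved by the Ricci flow in dimension $3$ (via Hamilton's ODE analysis of the Ricci tensor in the $3$D case), $\Ric(g(t))\ge 0$ holds along the flow, and this preserved lower Ricci bound, combined with the type-III decay, rules out finite-time singularities, so $g(t)$ extends to $[0,\infty)$.

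Next, I would verify the two conditions of Theorem \ref{mainthm1}. Condition (i) is automatic: $\Ric(g(t))\ge 0\ge -\psi/t$ for any $\psi\in(0,1/2)$. For condition (ii), I would first establish a uniform lower bound on the asymptotic volume ratio along the flow, either via Perelman's $\kappa$-non-collapsing applied to the initial non-collapsing, or via a direct monotonicity argument using $\Ric(g(t))\ge 0$ and the evolution of distances and volumes. This would give $\Vol_{g(t)}(B_{g(t)}(x,\sqrt{t}))\ge v_1 t^{3/2}$ uniformly in $x$ and $t$; combined with $|\Rm|(g(t))\le C/t$, the Cheeger--Gromov--Taylor injectivity radius estimate then produces $\Inj(g(t))\ge\beta\sqrt{t}$ for some $\beta>0$. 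With both conditions in place, Theorem \ref{mainthm1} yields that $M^3$ is diffeomorphic to $\mathbb{R}^3$.

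The hard part will be Step 1, producing the long-time Ricci flow. Because $g_0$ is not assumed to have bounded curvature, Shi's short-time existence does not apply directly, and one must rely on the more delicate non-compact existence theory for initial data with only a lower Ricci bound and uniform non-collapsing; this is where the dimensional assumption $n=3$ is essential, since it supplies both the preservation of $\Ric\ge 0$ and the specific $3$D tools needed to rule out singularity formation. A secondary technical hurdle is ensuring uniform $\sqrt{t}$-scale non-collapsing is propagated along the flow, which is the bridge from the curvature decay to the injectivity radius estimate required by hypothesis (ii).
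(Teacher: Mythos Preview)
Your overall strategy matches the paper's: build a long-time Ricci flow from $g_0$ with $\Ric(g(t))\ge 0$ and $\Inj(g(t))\ge\beta\sqrt{t}$, then invoke Theorem~\ref{mainthm1}. The ingredients you name---Simon--Topping short-time existence for $3$-dimensional non-collapsed data with $\Ric\ge 0$, preservation of $\Ric\ge 0$ in dimension~$3$, and Cheeger--Gromov--Taylor for the injectivity estimate---are exactly the ones the paper uses.

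The one genuine gap is your passage from short- to long-time existence. The Simon--Topping theorem only gives a flow on $[0,T(v_0)]$, and the estimates $|\Rm|\le C_0/t$, $\Inj\ge\beta\sqrt{t}$ are only established on a sub-interval $[0,\hat T(v_0)]$; the $C/t$ decay is \emph{not} known to persist past $\hat T$, so ``type-III decay rules out finite-time singularities'' is not justified as stated. To salvage your restarting idea you would need to show that $g(\hat T)$ again has the same volume ratio $v_0$ (i.e.\ that AVR is preserved along the flow) and then iterate with a uniform time step---doable, but extra work.

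The paper sidesteps this entirely by a scaling argument: since hypotheses (i)--(ii) and the estimates in \eqref{curinjbound} are all scale-invariant, one applies the Simon--Topping theorem to $R^{-2}g_0$, obtains a flow on $[0,\hat T]$ with the estimates, and rescales back to get a flow $g_R(t)$ on $[0,\hat T R^2]$ with $g_R(0)=g_0$ and the same scale-invariant bounds. Sending $R\to\infty$ and extracting a locally smooth limit (as in the proof of \cite[Theorem~1.1]{CHL24}) yields the long-time solution with $\Ric\ge 0$ and $\Inj\ge\beta\sqrt{t}$ in one stroke. This also makes your separate non-collapsing-along-the-flow step unnecessary, since the injectivity bound comes for free from \cite{ST21} and survives the rescaling.
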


\begin{remark} This result should be known by experts. Indeed, G.Liu has classified the topology of all $3$-dimensional complete non-compact manifold with non-negative Ricci curvature in \cite{Liu13} via minimal surface theory; if one further assume the manifold has maximal volume growth, then a simple argument can rule out other cases and the topology of  the manifold is the trivial one. We provide a Ricci flow proof  here by combining Theorem \ref{mainthm1} and a short-time existence result of Ricci flow for this setting in \cite{ST21}. Note that this result is not true in higher dimension, since Eguchi-Hanson metric is a counter-example, see \cite{ST22} for some related discussion. In higher dimension, one should assume stronger curvature condition or almost Euclidean condition.
\end{remark}

The next application studies the topology of complete non-compact manifold with small curvature concentration related to $W^{1,2}$-Sobolev inequality, which improves the main result in \cite{CM24} where some $L^p$-bound for Riemann curvature tensor is needed. 

\begin{cor}\label{bounded curvature}
    For $n\ge3$, there is a dimensional constants $\sigma_n>0$ such that if $(M^n,g_0)$ is an $n$-dimensional complete non-compact  manifold with \textbf{bounded curvature} satisfying
    \begin{enumerate}
        \item[\textup{(\romannumeral1)}] $\displaystyle\bigg(\int_M u^\frac{2n}{n-2}d\vol_{g_0}\bigg)^\frac{n-2}{n}\le C_S\int_M|\nabla u|^2d\vol_{g_0}$ for all $u\in W^{1,2}(M, g_0)$;
        \item[\textup{(\romannumeral2)}] $\displaystyle\bigg(\int_M|\Rm(g_0)|^{n/2}d\vol_{g_0}\bigg)^{2/n}\le\sigma_n\frac{1}{C_{S}}$,
    \end{enumerate}
    where $C_{S}$ is a positive constant. Then $M^n$ is diffeomorphic to $\mathbb{R}^n$.
\end{cor}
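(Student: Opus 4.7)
The plan is to construct from $g_0$ a complete long-time smooth Ricci flow $g(t)$ on $M^n\times[0,+\infty)$ that satisfies hypotheses (i) and (ii) of Theorem \ref{mainthm1}, and then invoke that theorem. Since $g_0$ has bounded curvature, Shi's short-time existence theorem yields a smooth Ricci flow $g(t)$ on a maximal interval $[0,T)$ with curvature bounded on every $[0,T']\subset[0,T)$. What remains is to show $T=+\infty$ and to establish the sharp decay $|\Rm(g(t))|\le\psi/t$ with $\psi<1/2$ together with $\Inj(g(t))\ge\beta\sqrt{t}$.

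The pointwise curvature decay comes from Moser iteration on $|\Rm|^{n/2}$. Starting from the parabolic inequality $(\partial_t-\Delta)|\Rm|^{n/2}\le c_n|\Rm|^{(n+2)/2}$, combining with the $W^{1,2}$-Sobolev inequality (i) and the smallness of $\|\Rm(g_0)\|_{L^{n/2}}$, one runs a continuity argument in the spirit of \cite{CM24} to conclude
\begin{equation*}
|\Rm(g(t))|\le\frac{\psi}{t}\qquad\text{for all }t\in(0,T),
\end{equation*}
where $\psi=\psi(n,\,C_S\|\Rm(g_0)\|_{L^{n/2}})$ can be made strictly less than $1/2$ by shrinking $\sigma_n$ (a choice depending only on $n$), since the relevant scale-invariant quantity $C_S\|\Rm(g_0)\|_{L^{n/2}}$ is bounded by $\sigma_n$. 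This estimate, together with bounded curvature on $[0,T']$, rules out a finite singular time via the standard blow-up criterion, so $T=+\infty$, and hypothesis (i) of Theorem \ref{mainthm1} follows from $\Ric\ge-(n-1)|\Rm|$.

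For hypothesis (ii), the $W^{1,2}$-Sobolev inequality on $(M,g_0)$ is well known to imply Euclidean-type volume non-collapsing $\Vol_{g_0}(B_{g_0}(x,r))\ge c_n C_S^{-n/2}r^n$ for all $r>0$. Transferring this to the evolved metric, using the Ricci lower bound just established together with the bounded distortion of distances and volumes at parabolic scale $\sqrt{t}$ (guaranteed by $t|\Rm(g(t))|\le\psi$), gives $\Vol_{g(t)}(B_{g(t)}(x,\sqrt{t}))\ge\alpha\,t^{n/2}$. A Cheeger-Gromov-Taylor type injectivity radius estimate at scale $\sqrt{t}$ then produces $\Inj(g(t))\ge\beta\sqrt{t}$, so that hypothesis (ii) holds and Theorem \ref{mainthm1} yields $M^n\cong\mathbb{R}^n$.

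The main obstacle is the persistence of an effective Sobolev-type estimate along the evolving metric, since the constant $C_S$ is only assumed at $t=0$, whereas the Moser iteration needs it (up to a factor) on the time slices. I would handle this by a bootstrap: bounded curvature on short time intervals keeps the Sobolev constant uniformly comparable to $C_S$ and validates the iteration on $[0,\varepsilon]$; once $|\Rm(g(t))|\le\psi/t$ is in force it propagates this effective Sobolev control to all later times, closing the continuity argument. The quantitative tracking of constants—ensuring that one can choose $\sigma_n$ depending only on $n$ so that the resulting $\psi$ lies below $1/2$—is the delicate step, but it parallels the estimates already used in \cite{CM24} with only Sobolev smallness replacing the $L^p$-curvature bounds assumed there.
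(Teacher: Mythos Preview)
Your overall strategy matches the paper's: produce a long-time Ricci flow from \cite{CM24}-type estimates, derive the injectivity radius bound from Sobolev-induced volume non-collapsing plus the Cheeger--Gromov--Taylor estimate, and then invoke Theorem~\ref{mainthm1}. The injectivity-radius half of your argument is essentially what the paper does (it cites \cite[Corollary~4.3]{CM24} and \cite[Lemma~3.1]{Mar25Sg} for the volume lower bound along the flow, then \cite{CGT82}).

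The one substantive difference is how hypothesis~(i) of Theorem~\ref{mainthm1} is obtained. You propose to track constants through the Moser iteration so that $\psi<\tfrac{1}{2}$ can be forced directly by shrinking $\sigma_n$, and you correctly flag this constant-tracking as the delicate step. The paper sidesteps it entirely: \cite[Theorem~1.2]{CM24} delivers not only the fixed bound $|\Rm(g(t))|\le (3e)^{n/2}t^{-1}$ (note $(3e)^{n/2}$ is far from small) but also the asymptotic $\lim_{t\to\infty} t\sup_M|\Rm(g(t))|=0$. Using the latter, one simply picks $T_0$ so large that $t|\Rm|\le \psi/(n-1)$ for all $t\ge T_0$, and applies Theorem~\ref{mainthm1} to the time-shifted flow $\tilde g(t)=g(T_0+t)$. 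This reduces the whole curvature hypothesis to a one-line observation and avoids any need to sharpen the $(3e)^{n/2}$ constant. Your route is plausible in principle, but it genuinely requires reproving the Moser/bootstrap step with explicit dependence of the output constant on $C_S\|\Rm(g_0)\|_{L^{n/2}}$, as well as the Sobolev-persistence issue you mention---strictly more work than the paper needs.
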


Note that the condition (ii) in Corollary \ref{bounded curvature} is explicit about the dependence of the Sobolev constant $C_{S}$, while Corollary \ref{small curvature concentration} merely implies the case of an implicit dependence of the Sobolev constant $C_{S}$ in condition (ii). We wonder that whether one can remove the bounded curvature assumption in Corollary \ref{bounded curvature}, see \cite{CWY22} for the compact case. 

The paper is organized as follows. In Section 2, we will prove the diffeomorphism (to $\mathbb{R}^n$) criterion via long-time Ricci flow, namely Theorem \ref{mainthm1}. In Section 3, we will show the applications, namely Corollary \ref{small curvature concentration}, \ref{3d}, \ref{bounded curvature}. 

{\it Acknowledgement:}
The authors would like to thank Liang Cheng and Xian-Tao Huang for some useful discussion.

\section{Diffeomorphism  criterion via long-time Ricci flow}

In this section, we will provide two methods to prove the diffeomorphism (to $\mathbb{R}^n$) criterion via long-time Ricci flow, namely Theorem \ref{mainthm1}.

\begin{proof}[Proof of Theorem \ref{mainthm1} (Method 1)]

    This method adapts the idea in the proof of \cite[Theorem 1.1]{HL21}, which is more or less standard in differential topology. 
    
    Fix $x_0\in M$. Let $exp_t:\mathbb{R}^n\to M$ be the exponential map w.r.t $g(t)$ at $x_0$ and denote $\mathcal{E}_t=exp_t^{-1}$. For simplicity, we denote
    \begin{align*}
        \tilde{B}(r)&:=\{x\in\mathbb{R}^n\big||x|<r\}, \\
        B_t(r)&:=B_{g(t)}(x_0,r),
    \end{align*}
    then $exp_t(\tilde{B}(r))=B_t(r)$ as long as $r<inj(g(t))$. 
    \par
    Let $\displaystyle t_i:=i\text{ and }R_i:=\bigg(\frac{1}{2}-\frac{1}{2^{i+1}}\bigg)\beta\sqrt{t_i}$ for $i=1,2,3,\cdots$. Denote $\displaystyle\varepsilon_i:=\frac{1}{2^{i+2}}\beta\sqrt{t_{i+1}}$. We claim that for each $t\in[t_i,t_{i+1}]$,
    \begin{equation}
        B_{t_1}\bigg(\frac{\beta}{4}t_i^{\frac{1}{2}-\psi}\bigg)\subset B_{t_i}(R_i)\subset B_t\bigg(R_i\bigg(\frac{t}{t_i}\bigg)^\frac{1}{2}\bigg)\subset B_{t_{i+1}}(R_{i+1}-\varepsilon_i). \label{ballnested}
    \end{equation}

     To see \eqref{ballnested}, we first note that 
     by the geodesic length evolution equation along the Ricci flow and (\romannumeral1), we have
    \[
        \frac{d}{dt}\log d_{g(t)}(x,x_0)=-\frac{1}{|\gamma|}\int_\gamma\Rc(\dot{\gamma},\dot{\gamma})d\theta\le\frac{\psi}{t}.
    \]
    It follows that
    \begin{equation}
        \frac{d_{g(t^\prime)}(x,x_0)}{d_{g(t)}(x,x_0)}\le\bigg(\frac{t^\prime}{t}\bigg)^\psi \label{distdist}
    \end{equation}
    for any $t^\prime\geq t$.
    
    For all $x\in B_{t_1}\big(\frac{\beta}{4}t_i^{\frac{1}{2}-\psi}\big)$, by (\ref{distdist}),
    \[
        d_{t_i}(x,x_0)\le d_{t_1}(x,x_0)t_i^\psi<\frac{\beta}{4}\sqrt{t_i}\le R_i.
    \]
    For all $x\in B_{t_i}(R_i)$, by (\ref{distdist}),
    \[
        d_t(x,x_0)\le d_{t_i}(x,x_0)\bigg(\frac{t}{t_i}\bigg)^\psi< R_i\bigg(\frac{t}{t_i}\bigg)^\frac{1}{2}.
    \]
    For all $x\in B_t\big(R_i\big(\frac{t}{t_i}\big)^\frac{1}{2}\big)$, by (\ref{distdist}),
    \[
        d_{t_{i+1}}(x,x_0)\le d_t(x,x_0)\bigg(\frac{t_{i+1}}{t}\bigg)^\psi< R_i\bigg(\frac{t_{i+1}}{t_i}\bigg)^\frac{1}{2}=R_{i+1}-\varepsilon_i.
    \]
    By the first inclusion in (\ref{ballnested}), $\{B_{t_i}(R_i)\}$ is an exhaustion of M, and obviously $\{\tilde{B}(R_i)\}$ is an exhaustion of $\mathbb{R}^n$.
    \begin{claim*}
        For each $i$, there is a diffeomorphism $\Phi_i:\mathbb{R}^n\to\mathbb{R}^n$, such that $\Phi_i(\mathcal{E}_{t_{i+1}}(B_{t_i}(R_i)))=\tilde{B}(R_i)=\mathcal{E}_{t_i}(B_{t_i}(R_i))$ and $\Phi_i$ is identity on $\displaystyle\mathbb{R}^n\setminus\tilde{B}(R_{i+1}-\frac{\varepsilon_{i}}{2})$.
    \end{claim*}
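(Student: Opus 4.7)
The plan is to produce $\Phi_i$ by the standard isotopy extension trick applied to the one-parameter family of exponential inverses $\{\mathcal{E}_s\}_{s\in[t_i,t_{i+1}]}$. The chain of inclusions \eqref{ballnested} together with the injectivity radius bound (ii) shows that, for every $s \in [t_i, t_{i+1}]$, the set $B_{t_i}(R_i)$ lies strictly inside the injectivity radius ball of $g(s)$, so each $\mathcal{E}_s$ restricts to a diffeomorphism from $B_{t_i}(R_i)$ onto an open subset of $\tilde{B}(R_i(s/t_i)^{1/2}) \subset \tilde{B}(R_{i+1}-\varepsilon_i)$. Consequently the formula
\[
F_s := \mathcal{E}_s \circ \exp_{t_i} : \tilde{B}(R_i) \longrightarrow \tilde{B}(R_{i+1}-\varepsilon_i), \qquad s \in [t_i, t_{i+1}],
\]
defines a smooth isotopy of embeddings with $F_{t_i} = \mathrm{id}_{\tilde{B}(R_i)}$ and $F_{t_{i+1}}(\tilde{B}(R_i)) = \mathcal{E}_{t_{i+1}}(B_{t_i}(R_i))$.

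Next I would define the time-dependent velocity vector field $Y_s$ on the tube $T := \bigcup_{s\in[t_i,t_{i+1}]} F_s(\tilde{B}(R_i))$ by $Y_s(F_s(x)) = \partial_s F_s(x)$, which is well defined and smooth because $F_s$ is a smooth family of diffeomorphisms onto their images. Since $\overline{T}$ is compact and contained in $\overline{\tilde{B}(R_{i+1}-\varepsilon_i)}$, which sits strictly inside the larger ball $\tilde{B}(R_{i+1}-\varepsilon_i/2)$, I pick a smooth cutoff $\chi : \mathbb{R}^n \to [0,1]$ that equals $1$ on a neighbourhood of $\overline{T}$ and vanishes outside $\tilde{B}(R_{i+1}-\varepsilon_i/2)$, and integrate the resulting compactly supported time-dependent vector field $\chi Y_s$ from initial time $s = t_i$. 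This produces a smooth ambient isotopy $\Phi_s : \mathbb{R}^n \to \mathbb{R}^n$ with $\Phi_{t_i} = \mathrm{id}$; moreover, by ODE uniqueness, $\Phi_s$ coincides with $F_s$ on $\tilde{B}(R_i)$ (because $\chi \equiv 1$ along the orbits, which stay in the tube), and each $\Phi_s$ is the identity outside $\tilde{B}(R_{i+1}-\varepsilon_i/2)$.

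Setting $\Phi_i := \Phi_{t_{i+1}}^{-1}$ then yields
\[
\Phi_i\bigl(\mathcal{E}_{t_{i+1}}(B_{t_i}(R_i))\bigr) = \Phi_{t_{i+1}}^{-1}\bigl(F_{t_{i+1}}(\tilde{B}(R_i))\bigr) = \tilde{B}(R_i),
\]
with $\Phi_i = \mathrm{id}$ outside $\tilde{B}(R_{i+1}-\varepsilon_i/2)$, as required. The only genuine technical point is that the tube $T$ must sit strictly inside the cutoff support; this is exactly what the quantitative gap between $R_{i+1}-\varepsilon_i$ and $R_{i+1}-\varepsilon_i/2$ (engineered by the definitions of $R_i$ and $\varepsilon_i$) is designed to provide, so beyond this the argument is a routine application of isotopy extension via time-dependent flows.
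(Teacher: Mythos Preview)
Your argument is essentially the paper's: both construct $\Phi_i$ by cutting off the velocity field of the isotopy $s\mapsto\mathcal{E}_s|_{B_{t_i}(R_i)}$ and integrating, the only cosmetic difference being that the paper runs the flow backward from $t_{i+1}$ to $t_i$ (so $\Phi_i=\Psi_1$ directly) while you run it forward and then invert. One small point to tighten: as written, your $Y_s$ is defined only on $F_s(\tilde B(R_i))$, not on all of $\operatorname{supp}\chi$, so $\chi Y_s$ is not yet a global vector field; this is harmless because $\mathcal{E}_s$ (and hence $F_s$, $Y_s$) is in fact defined on a ball containing $\tilde B(R_{i+1}-\varepsilon_i/2)$ for every $s\in[t_i,t_{i+1}]$, which is exactly the domain convention the paper uses when it introduces $V(\cdot,t)$.
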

    \begin{proof}[Proof of claim]
        For each $t\in[t_i,t_{i+1}]$, by (\ref{ballnested}), we know that $\mathcal{E}_t$ embeds $B_{t_i}(R_i)$ into $\mathbb{R}^n$, and $\mathcal{E}_t(B_{t_i}(R_i))\subset\tilde{B}(R_{i+1}-\varepsilon_i)$.
        \par
        For an interior point $x$ in the domain of $\mathcal{E}_t$, $t\in[t_i,t_{i+1}]$, let $V(x,t)$ be the tangent vector of the $t$-parameterized curve $\mathcal{E}_t(x)\subset\mathbb{R}^n$,
        \begin{equation}
            \frac{d}{dt}\mathcal{E}_t(x)=V(\mathcal{E}_t(x),t). \label{evolu}
        \end{equation}
        Now choose a smooth time-independent cutoff function $\varphi$ such that $\varphi=1$ on $\tilde{B}(R_{i+1}-\varepsilon_i)$ and $\varphi=0$ on $\displaystyle\mathbb{R}^n\setminus\tilde{B}(R_{i+1}-\frac{\varepsilon_i}{2})$. Let $\Psi_s$, $s\in[0,1]$ be the family of diffeomorphisms generated by $-(t_{i+1}-t_i)\varphi V(x,t_{i+1}-(t_{i+1}-t_i)s)$ with $\Psi_0=id$, namely
        \[
            \left\{
            \begin{aligned}
                \frac{d}{ds}\Psi_s(y)&= -(t_{i+1}-t_i)\varphi V(\Psi_s(y),t_{i+1}-(t_{i+1}-t_i)s), \\
                \Psi_0(y)&=y.
            \end{aligned}
            \right.
        \]
        For all $x\in B_{t_i}(R_i)$, let $\gamma(s):=\Psi_s(\mathcal{E}_{t_{i+1}}(x))$. Then $\varphi(\gamma(s))=1$ if $s\ge0$ sufficiently small. Denote $\displaystyle\eta(\tau):=\gamma\bigg(\frac{t_{i+1}-\tau}{t_{i+1}-t_i}\bigg)$ and $s=\frac{t_{i+1}-\tau}{t_{i+1}-t_i}$. We have
        \[
            \left\{
            \begin{aligned}
                \frac{d}{d\tau}\eta(\tau)&= \frac{d}{ds}\gamma(s)\cdot(-\frac{1}{t_{i+1}-t_i})=V(\eta(\tau),\tau), \\
                \eta(t_{i+1})&=\gamma(0)=\mathcal{E}_{t_{i+1}}(x).
            \end{aligned}
            \right.
        \]
        This is exactly (\ref{evolu}). By the uniqueness of ODE, $\eta(\tau)=\mathcal{E}_\tau(x)$ for small $s\ge0$. Since $\varphi(\mathcal{E}_t(x))=1$ for all $t\in[t_i,t_{i+1}]$, we have $\eta(\tau)=\mathcal{E}_\tau(x)$ for all $s\in[0,1]$. Hence
        \[
            \Psi_1(\mathcal{E}_{t_{i+1}}(x))=\gamma(1)=\eta(t_i)=\mathcal{E}_{t_i}(x).
        \]
        So we have $\Psi_1\circ\mathcal{E}_{t_{i+1}}=\mathcal{E}_{t_i}\text{ on }B_{t_i}(R_i)$. By the construction of the cutoff function $\varphi$, $\Psi_1$ is obviously identity on $\displaystyle\mathbb{R}^n\setminus\tilde{B}(R_{i+1}-\frac{\varepsilon_{i}}{2})$. Then $\Phi_i=\Psi_1$ is the desired diffeomorphism.
    \end{proof}
    Define $\Theta_i:B_{t_i}(R_i)\to\mathbb{R}^n$ by
    \[
        \Theta_i:=\Phi_1\cdots\Phi_{i-1}\mathcal{E}_{t_i}.
    \]
    By the definition of $\Phi_i$, $\Theta_i$ agrees with $\Theta_j$ on $B_{t_j}(R_j)$ for each $j<i$. Then we can let $i\to+\infty$ to obtain a diffeomorphism from $M$ to $\mathbb{R}^n$.
\end{proof}

\begin{proof}[Proof of Theorem \ref{mainthm1} (Method 2)]
    The proof is just a slight modification from the construction in  \cite[Theorem 5.7]{Wang20}. We provide details here for completeness. 
    
    Fix $x_0\in M$. Define
    \[
        M^\prime:=\{(x,t)\big|d_{g(t)}(x,x_0)=\frac{\beta}{2}\sqrt{t}\}.
    \]
    By the injectivity condition (\romannumeral2), $M^\prime=\bigcup_{t\ge0}\partial B_{g(t)}(x_0,\frac{\beta}{2}\sqrt{t})$ is a smooth manifold. Define a space-time exponential map as follows
    \begin{align*}
        Exp:\quad\mathbb{R}^n\times[0,+\infty)&\to M\times[0,+\infty), \\
        (v,t)&\mapsto(exp_{x_0,g(t)}v,t).
    \end{align*}
    Let $\Omega:=\{(x,t)\big|d_{g(t)}(x,x_0)\le\frac{3\beta}{4}\sqrt{t}\}$. Then by the injectivity condition (\romannumeral2) again, $Exp|_\Omega$ is a diffeomorphism. Since $M^\prime\subset\Omega$, we know that $M^\prime$ is diffeomorphic to $Exp^{-1}(M^\prime)=\{(v,t)\big||v|=\frac{\beta}{2}\sqrt{t}\}$ via the diffeomorphism map $Exp^{-1}$. Note that $\{(v,t)\big||v|=\frac{\beta}{2}\sqrt{t}\}$ is diffeomorphic to $\mathbb{R}^n$ through the projection $P^\prime$ defined as
    \[
        P^\prime(v,t):=v\in\mathbb{R}^n.
    \]
    So we obtained the diffeomorphism from $M^\prime$ to $\mathbb{R}^n$: $P^\prime\circ Exp^{-1}$.

    To show $M$ is diffeomorphic to $M^\prime$, we define the projection map $P$ from $M^\prime$ to $M$ by
    \[
        P(x,t):=x,\quad\forall(x,t)\in M^\prime.
    \]
    Since the projection map $P$ is smooth and non-degenerate everywhere, it suffices to show that $P$ is both injective and surjective. 
    
    We prove the injectivity first. Suppose $P(x,t_1)=P(y,t_2)$ for some $x,y\in M$ and $t_1\le t_2$. It is clear that $x=y$. If $t_1=0$, then $x=y=x_0$ and hence $t_2=0$. For $0<t_1\le t_2$, we have
    \begin{equation}
        \frac{d_{g(t_1)}(x,x_0)}{\sqrt{t_1}}=\frac{d_{g(t_2)}(x,x_0)}{\sqrt{t_2}}=\frac{\beta}{2}. \label{injeq}
    \end{equation}
    Combining with (\ref{distdist}), we arrive at
    \[
        \bigg(\frac{t_2}{t_1}\bigg)^\frac{1}{2}=\frac{d_{g(t_2)}(x,x_0)}{d_{g(t_1)}(x,x_0)}\le\bigg(\frac{t_2}{t_1}\bigg)^\psi,
    \]
    which implies $t_2\le t_1$ since $\psi<\frac{1}{2}$. Then we have $t_1=t_2$ and the injectivity is obtained.

    We now show that $P$ is surjective. It is trivial that $P(x_0,0)=x_0$. We need to prove that for each $x\in M\setminus\{x_0\}$, there exists $t>0$ such that
    \[
        d_{g(t)}(x,x_0)=\frac{\beta}{2}\sqrt{t}.
    \]
    This is true by noting that the function $\frac{d_{g(t)}(x,x_0)}{\sqrt{t}}$ is continuous in $t$ and the fact that 
    \[
        \lim_{t\to0}\frac{d_{g(t)}(x,x_0)}{\sqrt{t}}=+\infty
    \]
    and
    \[
        \lim_{t\to+\infty}\frac{d_{g(t)}(x,x_0)}{\sqrt{t}}\le\lim_{t\to+\infty}\frac{d_{g(1)}(x,x_0)\cdot t^\psi}{\sqrt{t}}=0,
    \]
    where we used (\ref{distdist}).
    
    Therefore, we prove that $P: M^\prime\to M$ is a diffeomorphism. Combining with the one we have already obtained from $M^\prime$ to $\mathbb{R}^n$, we obtain the desired diffeomorphism map from $M$ to $\mathbb{R}^n$: $P^\prime\circ Exp^{-1}\circ P^{-1}$. This completes the proof.
\end{proof}

\section{Applications}

\begin{proof}[Proof of Corollary \ref{small curvature concentration}]
    By the proof of \cite[Theorem 1.3]{CHL24} and \cite[Theorem 1.4]{Mar25}, there exists a long-time solution $g(t)$ to the Ricci flow with $g(0)=g_0$ and
    \[
        \left\{
            \begin{aligned}
                &|\Rm(g(t))|\le C_0\sigma t^{-1}; \\
                &\Inj(g(t))\ge C_0^{-1}\sqrt{t},
            \end{aligned}
        \right.
    \]
    where $C_0=C_0(n,A,v_0)>0$. Since $\sigma$ can be small, the result follows from Theorem \ref{mainthm1}. 
\end{proof}

\begin{proof}[Proof of Corollary \ref{3d}]
    By assumptions, there exists $v_0>0$ such that for all $x\in M$ and $r>0$, we have
    \begin{equation}
        \Vol_{g_0}(B_{g_0}(x,r))\ge v_0r^n. \label{mvg}
    \end{equation}
    By \cite[Theorem 1.7]{ST21}, there exists $T=T(v_0)>0$ and a smooth complete Ricci flow $\tilde{g}(t)$ defined on $M\times[0,T]$ with $\tilde{g}(0)=g_0$. It follows from \cite{CXZ13} that non-negativity of Ricci curvature is preserved. By \cite[Lemma 4.1]{ST21}, there exist $\beta=\beta(v_0)>0$, $C_0=C_0(v_0)>0$ and $\hat{T}=\hat{T}(v_0)\in(0,T]$ such that
    \begin{equation} \label{curinjbound}
        \left\{
        \begin{aligned}
            &|\Rm(\tilde{g}(t))|\le \frac{C_0}{t}; \\
            &\Inj(\tilde{g}(t))\ge\beta\sqrt{t}
        \end{aligned}
        \right.
    \end{equation}
    for all $t\in[0,\hat{T}]$.

    Since the assumptions and (\ref{curinjbound}) are scaling invariant, by applying the argument above to $R^{-2}g_0$ for $R\to+\infty$ and re-scaling it back, we obtain a sequence of Ricci flow $g_R(t)$ on $[0,\hat{T}R^2]$ with $g_R(0)=g_0$ for any $R>0$. Using the argument in the proof of \cite[Theorem 1.1]{CHL24}, we can extract convergent sub-sequence in locally smooth sense to obtain a smooth complete long-time solution $g(t)$ to the Ricci flow with $g(0)=g_0$ and
    \[
        \left\{
        \begin{aligned}
            &\Ric(g(t))\ge0; \\
            &\Inj(g(t))\ge\beta\sqrt{t}.
        \end{aligned}
        \right.
    \]
    Then the result follows from Theorem \ref{mainthm1} or the proof of \cite[Theorem 1.1]{HL21}.
\end{proof}

\begin{proof}[Proof of Corollary \ref{bounded curvature}]
    By \cite[Theorem 1.2]{CM24}, there exists a smooth complete long-time solution $g(t)$ to the Ricci flow with $g(0)=g_0$ and
    \[
        \left\{
            \begin{aligned}
                &|\Rm(g(t))|\le (3e)^\frac{n}{2}t^{-1}; \\
                &\lim_{t\to+\infty}t\sup_M|\Rm(g(t))|=0.
            \end{aligned}
        \right.
    \]
    By \cite[Corollary 4.3]{CM24} and \cite[Lemma 3.1]{Mar25Sg}, there exists $v_0=v_0(n,C_{g_0})>0$ such that
    \[
    \Vol_{g(t)}B_{g(t)}(x,\sqrt{t})\ge v_0t^\frac{n}{2}\text{ for all }t>0.
    \]
    Let $\tilde{g}:=\frac{1}{t}g(t)$. Then we have
    \[
        \left\{
            \begin{aligned}
                &|\Rm(\tilde{g})|\le(3e)^\frac{n}{2}; \\
                &\Vol_{\tilde{g}}B_{\tilde{g}}(x,1)\ge v_0.
            \end{aligned}
        \right.
    \]
    By \cite[Theorem 4.7]{CGT82}, there exists $\beta=\beta(n,C_{g_0})>0$ such that $\Inj(\tilde{g})\ge\beta$, which implies $\Inj(g(t))\ge\beta\sqrt{t}$.
   
    Since $\,\lim_{t\to+\infty}t\sup_M|\Rm(g(t))|=0$, then the curvature condition in Theorem \ref{mainthm1} is satisfied for any $\psi>0$. Indeed, since there exists $T_0>0$ such that $|\Rm(g(t)|\le\frac{\psi}{(n-1)t}$ for all $t\ge T_0$, it suffices to consider $\tilde{g}(t)=g(T_0+t)$. By Theorem \ref{mainthm1}, the result follows. 
\end{proof}

\bibliographystyle{amsplain}
\bibliography{refs}

\end{document}